\newtheorem{teo}{Theorem}
\newtheorem{theorem}{Theorem}[section]
\newtheorem{proposition}[theorem]{Proposition}
\def\m{\ensuremath{\mathcal{M}}}
\def\b{\ensuremath{\mathcal{B}}}
\def\c{\ensuremath{\mathcal{C}}}
\def\cat{\ensuremath{\mathbf{Cat}}}
\def\mcat{\ensuremath{\mathbf{MonCat}}}
\def\bmcat{\ensuremath{\mathbf{BrMonCat}}}
\def\set{\ensuremath{\mathbf{Set}}}
\def\diag{\ensuremath{\mathrm{diag}}}
\newcommand{\sset}{\ensuremath{\mathbf{Simpl.Set}}}
\newcommand{\bicat}{\ensuremath{\mathbf{Bicat}}}
\newcommand{\bsset}{\ensuremath{\mathbf{Bisimpl.Set}}}
\begin{document}
\title[{\em Homotopy colimit theorem for braided monoidal categories}]{A homotopy colimit theorem for diagrams of braided monoidal categories}
\author{A. R. Garz\'{o}n}
\author{R. P\'{e}rez}
\thanks{This work has been partially supported by DGI of Spain and FEDER (Project: MTM2007-65431); Consejer\'{i}a de Innovaci\'{o}n
de J. de Andaluc\'{i}a (FQM-168).}
 \address{Departamento de \'Algebra, Facultad de Ciencias, Universidad de Granada}
 \address{ 18071 Granada, Spain.}
\address{agarzon@ugr.es  }
\begin{abstract}
Thomason's Homotopy Colimit Theorem has been extended to bicategories and this extension can be adapted, through the delooping principle, to a corresponding theorem for diagrams of monoidal categories. In this version, we show that the homotopy type of the diagram  can be also  represented by a genuine simplicial set nerve associated with it. This suggests the study of a homotopy colimit theorem, for diagrams $\b$ of braided monoidal categories, by means of a simplicial set {\em nerve of the diagram}. We prove that it is weak homotopy equivalent to the homotopy colimit of the diagram, of simplicial sets, obtained from composing $\b$ with the geometric nerve functor of braided monoidal categories.

\end{abstract}
\keywords{}
\maketitle
\emph{Mathematical Subject Classification:} 18D05, 18D10, 55P15, 55P48.

\section{Introduction}
The Grothendieck construction    $\int_{I}\mathcal{C}$ \cite{Grothendieck1971} on a  diagram of categories ${\mathcal{C}: I^{op}\rightarrow \cat}$  has been recently  extended to the context of 2-categories \cite{Ceg-2011} and, more broadly, of bicategories  \cite{Carrascoa}. These facts allowed then to show extensions, to diagrams of 2-categories \cite[Theorem 4.5]{Ceg-2011} and, to diagrams of bicategories \cite[Theorem 7.3]{Carrascoa}, of the well-known Thomason's Homotopy Colimit Theorem \cite{Thomason1979}. Through the elemental delooping construction \cite{Kapranov1994}, a monoidal category $\mathcal{M}=(\mathcal{M},\otimes,a,\text{I},l,r)$ \cite{Mac1998} can be regarded as a bicategory $\Omega^{^{-1}}\hspace{-5pt} \mathcal{M}$ with only one object, and therefore that extension provides a corresponding theorem for diagrams of  monoidal categories. In this theorem, we remark that the homotopy type of the homotopy colimit of the diagram can be also represented by a genuine simplicial set nerve associated with the diagram. A braided monoidal category $\mathcal{M}=(\mathcal{M}, \otimes,a,\mathrm{I},l,r,\boldsymbol{c})$ \cite{Joyal1993} defines, by double delooping, a one-object, one-arrow tricategory $\Omega^{^{-2}}\hspace{-5pt}\m$ \cite{Berger1999, Kapranov1994}. Although neither Grothendieck construction nor homotopy colimit theorem is known for diagrams of tricategories, the above remark suggests an extension of Thomason's theorem, to diagrams of braided monoidal categories $\b:I^{op}\rightarrow \bmcat$. The composite of $\b$ with the geometric nerve $Ner$ of braided monoidal categories \cite{CarCG-2010} gives  a diagram of simplicial sets, whose homotopy colimit is the matter of study. For it, we use a notion of nerve $Ner_{I}\b$, associated with the diagram $\b$,   that is a particular case of a general notion of nerve of a pseudofunctor of braided monoidal categories introduced in \cite{Cegarra2007a}.
We actually show that $Ner_{I}\b$ is able to represent the homotopy type of the diagram through the existence (Theorem \ref{teor}) of a natural weak homotopy equivalence $\eta : hocolim_{I} Ner \b\rightarrow Ner_{I}\b\,.$

\section{Preliminaries}
We start  fixing notations and terminology and reviewing necessary results from the background of (bi)simplicial sets  used throughout the paper. We employ the standard symbolism and nomenclature to be found in texts on simplicial homotopy theory (see \cite{May1967, Goerss1999}).

Hereafter, we shall regard each ordered set $[n]=\{0,1,\cdots n\}$\ as the category with exactly one arrow $j\rightarrow i$\ if $i\leq j$. Then, a non-decreasing map $[m]\rightarrow[n]$\ is a functor, so that we can see $\Delta$, the simplicial category of finite ordinal numbers, as a full subcategory of $\cat$, the category of small categories. $\sset$ will denote the category of simplicial sets, that is, functors $S: \Delta^{op}\rightarrow \set$.  A \textit{weak homotopy equivalence} of simplicial sets is a simplicial map whose geometric realization is a homotopy equivalence.

A bisimplicial set is a functor $S: \Delta^{op}\times \Delta^{op}\rightarrow \set$. This amounts to a family of sets  $\{S_{p,q};\ p,q\geq 0 \} $\ together with horizontal and vertical face and degeneracy operators

$$\xymatrix{S_{p+1,q}&\ar[l]_-{\textstyle s_i^h}S_{p,q}\ar[r]^-{\textstyle d_i^h}&S_{p-1,q}, \hspace{0.6cm}
S_{p,q+1}&\ar[l]_-{\textstyle s_j^v} S_{p,q}\ar[r]^-{\textstyle d_j^v}&S_{p,q-1},}
$$
with $0\!\leq i\!\leq p$ and $0\!\leq j\!\leq q$ respectively, such that, for all $p$ and $q$, both
$S_{p,\ast}$ and $S_{\ast,q}$ are simplicial sets and the horizontal operators commute with the
vertical ones. $\bsset$ will denote the category of bisimplicial sets.

We shall  use the bar construction on a bisimplicial set $\overline{W}S$, also called its {\em codiagonal} or
{\em total complex}. Let us recall that the functor
$$
\overline{W}\!\!: \bsset \to \sset
$$
  can be described, for any given bisimplicial set $S$, as follows \cite[\S III]{Artin1966}: the set of $p$-simplices of $\overline{W}S$ is
$$
\Big\{(t_{0,p} \dots,t_{p,0})\in \prod_{m=0}^{p}S_{m,p-m}~|~d^v_0t_{m,p-m}=d^h_{m+1}t_{m+1,p-m-1},\, 0\!\leq m\!< p\Big\}$$
and, for $0\!\leq i\!\leq p$, the faces and degeneracies of a $p$-simplex are given by
$$
\begin{array}{l}d_i(t_{0,p}
\dots,t_{p,0})=(d_i^vt_{0,p},\dots,d_i^vt_{i-1,p-i+1},d_i^ht_{i+1,p-i-1},\dots,d_i^ht_{p,0}),
\\[-4pt]~\\
s_i(t_{0,p} \dots,t_{p,0})=(s_i^vt_{0,p},\dots,s_0^vt_{i,p-i},s_i^ht_{i,p-i},\dots,s_i^ht_{p,0}).
\end{array}
$$
On the other
hand, by composing with the diagonal functor $\diag\!\!:\Delta^{\!{^\mathrm{op}}}\rightarrow\Delta^{\!{^\mathrm{op}}}\times\Delta^{\!{^\mathrm{op}}}$,
the bisimplicial set $S$ also provides another simplicial set $\diag \,S\!\!:[n]\mapsto S_{n,n}$, whose
face and degeneracy operators are given in terms of those of $S$ by the formulas $d_i=d_i^hd_i^v$
and $s_i=s_i^hs_i^v$, respectively.

For any bisimplicial set $S$, there is a natural weak homotopy equivalence \cite{Cegarra2005a,Cegarra2007b}
\begin{equation}\label{Phi} \Phi\!\!:\diag\,S\to \overline{W}S,
\end{equation}
which carries a $p$-simplex $t_{p,p}\in \diag \,S$
to
$$
\Phi t_{p,p}=\Big((d_1^h)^pt_{p,p},(d_2^h)^{p-1}d_0^vt_{p,p},\dots, (d_{m+1}^h)^{p-m}(d_0^v)^mt_{p,p},\dots,
(d_0^v)^pt_{p,p}\Big).
$$

Next subsection is devoted to recall the extension to bicategories, shown in \cite{Carrascoa}, of the Thomason's homotopy colimit theorem.

\subsection{Thomason's Homotopy Colimit Theorem for bicategories}
 Recall that the Grothendieck nerve functor $Ner : \cat \rightarrow \sset$  associates, with every small category $\mathcal{C}$, the simplicial set $Ner\, \mathcal{C}$
whose $n$-simplices are all functors ${F :[n]\rightarrow \mathcal{C}}$ or, equivalently, tuples of arrows in $\mathcal{C},\ F=(Fj \xrightarrow{F_{i,j}} Fi)_{0 \leq i\leq j\leq n}$, such that $F_{i,j}F_{j,k}=F_{i,k}$\ for $i\leq j \leq k$\ and $F_{i,i}=1_{Fi}$.
 For any given small category $I$ and  any diagram of categories  $\mathcal{C}: I^{op}\rightarrow \cat$, the composite $Ner \mathcal{C}: I^{op}\rightarrow \sset$ is just a diagram of simplicial sets. The homotopy colimit construction by Bousfield and Kan \cite{Bousfield1972} of this diagram is the simplicial set
$$\begin{aligned}
hocolim_{I}Ner\mathcal{C}: &\ \Delta^{op} \rightarrow \set\\
&\left[ n \right] \hspace*{0,25cm}  \mapsto  \underset{\tiny{\left[ n \right] \xrightarrow{\sigma} I}}{\coprod} Func([n],\mathcal{C}_{\sigma 0})\,.
\end{aligned}$$
Considering Grothendieck construction  $\int_{I}\mathcal{C}$ of the diagram $\mathcal{C}: I^{op}\rightarrow \cat$,  Thomason's homotopy colimit theorem states the following:

\begin{teo}(\cite[Theorem 1.2]{Thomason1979})
For any diagram of categories $\mathcal{C}: I^{op}\rightarrow \cat$ there is a natural weak homotopy equivalence
$$
\eta:  hocolim_{I}Ner\mathcal{C}\rightarrow Ner \int_{I}\mathcal{C}\,.
$$
\end{teo}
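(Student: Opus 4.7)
The plan is to identify $\eta$ with the natural weak equivalence $\Phi\colon \diag\,S\to \overline{W}S$ from (\ref{Phi}), for a suitable bisimplicial set $S$. I first define $\eta$ on $n$-simplices by transporting $\tau$ into each fibre: given a pair $(\sigma,\tau)$ with $\sigma\colon[n]\to I$ and $\tau\colon[n]\to\mathcal{C}_{\sigma 0}$, I take $\eta(\sigma,\tau)\colon[n]\to\int_I \mathcal{C}$ to be the functor sending $k$ to the object $(\sigma k,\sigma(0,k)^*\tau(k))$ and $i\leq k$ to the arrow determined by $\sigma(i,k)$ in $I$ together with $\sigma(0,k)^*(\tau_{i,k})$ in $\mathcal{C}_{\sigma k}$; checking simpliciality and naturality in $\mathcal{C}$ and $I$ is routine.

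Next I introduce the bisimplicial set
\[
S_{p,q}\;=\;\coprod_{\sigma\colon[q]\to I}\mathrm{Func}\bigl([p],\mathcal{C}_{\sigma 0}\bigr),
\]
with horizontal face/degeneracy acting on the $[p]$-variable (on $\tau$) and vertical face/degeneracy acting on the $[q]$-variable (on $\sigma$), where $d_0^v$ carries the Bousfield--Kan twist $d_0^v(\sigma,\tau)=(d_0\sigma,\sigma(0,1)^*\tau)$. The diagonal face formula $d_i=d_i^h d_i^v$ then reproduces exactly the Bousfield--Kan operators, so $\diag\,S\cong hocolim_I\,Ner\,\mathcal{C}$.

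The central step is the identification $\overline{W}S\cong Ner\int_I\mathcal{C}$. Writing $t_{m,n-m}=(\sigma_m,\tau_m)\in S_{m,n-m}$ and unpacking the compatibility $d_0^v t_{m,n-m}=d_{m+1}^h t_{m+1,n-m-1}$ forces $\sigma_{m+1}=d_0\sigma_m$ (so that all $\sigma_m$ are tails of one underlying $\sigma\colon[n]\to I$) together with $d_{m+1}\tau_{m+1}=\sigma_m(0,1)^*\tau_m$. Setting $X_k=\tau_k(k)\in\mathcal{C}_{\sigma k}$ and reading off the top-edge morphisms $f_{i,k}\colon X_k\to\sigma(i,k)^*X_i$, the cocycle identity $f_{i,m}=\sigma(k,m)^*(f_{i,k})\circ f_{k,m}$ emerges, giving precisely the data of a functor $[n]\to\int_I\mathcal{C}$.

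Finally, $\Phi$ coincides with $\eta$ under these identifications: computing $(d_{m+1}^h)^{n-m}(d_0^v)^m t_{n,n}$ on $t_{n,n}=(\sigma,\tau)$ yields $(d_0^m\sigma,\sigma(0,m)^*(\tau|_{[m]}))$, whose top vertex is $\sigma(0,m)^*\tau(m)=\eta(\sigma,\tau)(m)$, and the connecting morphisms match analogously. Hence $\eta=\Phi$ and (\ref{Phi}) delivers the weak equivalence. The main obstacle is the third step, unpacking the codiagonal compatibility into Grothendieck data: this requires careful indexing bookkeeping, in particular tracking how the $d_0^v$-twist injects the Grothendieck connecting morphisms, but no genuinely new idea is needed.
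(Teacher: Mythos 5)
Your argument is correct, but it is worth being clear that the paper itself offers no proof of this statement: it is quoted directly from Thomason, and Thomason's original proof proceeds quite differently, by a zig-zag of natural maps through intermediate (lax/pseudo) colimit constructions and homotopy-theoretic cofinality arguments, rather than by an exact simplicial identification. What you have written is instead precisely the strategy that the paper deploys later for its own main result (Theorem \ref{teor}) and that \cite{Carrascoa} uses for bicategories: realize $hocolim_{I}Ner\mathcal{C}$ as $\diag\,S$ for the simplicial replacement $S_{p,q}=\coprod_{\sigma:[q]\to I}\mathrm{Func}([p],\mathcal{C}_{\sigma 0})$, invoke the natural weak equivalence $\Phi:\diag S\to \overline{W}S$ of (\ref{Phi}), and exhibit a simplicial isomorphism $\overline{W}S\cong Ner\int_I\mathcal{C}$ so that $\eta$ factors as $\Psi\Phi$. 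Your unpacking of the codiagonal compatibility $d_0^vt_{m,p-m}=d^h_{m+1}t_{m+1,p-m-1}$ into the data $(X_m,\sigma m)$ with connecting morphisms $f_{k,m}:X_m\to\sigma(k,m)^*X_k$ satisfying $f_{i,m}=\sigma(k,m)^*(f_{i,k})\circ f_{k,m}$ is exactly the composition law of the Grothendieck construction, and your computation that $\Phi$ transports $(\sigma,\tau)$ to the simplex with top vertices $\sigma(0,m)^*\tau(m)$ recovers the standard formula for Thomason's $\eta$; so the identification $\eta=\Psi\Phi$ is sound. What your route buys is a strictly stronger statement at the level of simplicial sets (an isomorphism $\overline{W}S\cong Ner\int_I\mathcal{C}$, not merely a weak equivalence), which is why the same scheme generalizes to the monoidal and braided monoidal settings of the paper, where no Grothendieck construction is available and $\overline{W}S$ itself serves as the model; what Thomason's original route buys is independence from the $\diag\to\overline{W}$ comparison and a formulation adapted to his model-categorical applications.
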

In this way, the classifying space of the category  $\int_{I}\mathcal{C}$ (that is, the geometric realization of its nerve) can be thought of as a homotopy colimit of the classifying spaces of the categories $\mathcal{C}_i$ that arise from the initial input data $i\rightarrow \mathcal{C}_i$ given by the diagram of categories $\mathcal{C}$.

Below we quickly review the extension to bicategories, given  in \cite{Carrascoa}, of the above theorem. For the background concerning bicategories we refer to \cite{Benabou1967, Gordon1995, Street1996}. $\bicat$ will denote the category of bicategories and homomorphisms between them (i.e., lax functors  where the structure constraints are invertible). For any diagram of bicategories  $\mathcal{C}: I^{op}\rightarrow \bicat$ there is  \cite{Carrascoa} a bicategorical Grothendieck construction $\int_{I}\mathcal{C}$ that suitably assembles all bicategories $\mathcal{C}_i$, $i\in \text{Ob}I$.
Also, for any  small bicategory $\mathcal{C}$, we recall that the \textit{geometric nerve} of  $\mathcal{C}$ is the simplicial set,
$$\begin{aligned}
\Delta \mathcal{C}:& \ \Delta^{op}\rightarrow Set\\
& \left[ n \right] \hspace*{0,25cm} \mapsto LaxFunc([n], \mathcal{C})
\end{aligned}$$
whose  $n$-simplices are all lax functors $F:[n]\rightarrow \mathcal{C}$. The \textit{unitary geometric nerve} of $\mathcal{C}$,  $\Delta^{u}\mathcal{C}$, is defined considering only normal (or unitary, i.e., when the unit constraints are all identities) lax functors $F:[n]\rightarrow \mathcal{C}$.

 Then, Thomason's theorem admits the following generalization to diagrams of bicategories:
\begin{teo}\label{t}\cite[Theorem 7.3]{Carrascoa}
For any diagram of bicategories $\mathcal{C}: I^{op}\rightarrow \bicat$, there exists a natural weak homotopy equivalence of simplicial sets
\begin{eqnarray}\label{eta}
\eta: hocolim_{I}\Delta \mathcal{C}\rightarrow \Delta \int_{I}\mathcal{C}
\end{eqnarray}
where $\Delta \mathcal{C}: I^{op}\rightarrow \sset$\ is the diagram of simplicial sets obtained by composing  $\mathcal{C}$ with the geometric nerve functor $\Delta: \bicat \rightarrow \sset$.
\end{teo}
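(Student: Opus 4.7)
The plan is to realise both sides of \eqref{eta} as different homotopy-invariant constructions on a single bisimplicial set, and then to invoke the natural weak equivalence \eqref{Phi}.

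Concretely, I would introduce the bisimplicial set $S\in\bsset$ with
$$
S_{p,q}\;=\!\!\!\coprod_{\sigma:[p]\to I}\!\!\lfunc([q],\c_{\sigma 0}),
$$
whose vertical face and degeneracy operators come from the usual coface/codegeneracy maps $[q\pm 1]\to[q]$ acting on the lax-functor coordinate, while the horizontal operators come from the corresponding ones of $\gner$ acting on $\sigma$. The only subtle piece is $d_0^h$, which additionally transports the lax functor along the homomorphism $\c(\sigma(0\!\le\! 1)):\c_{\sigma 0}\to\c_{\sigma 1}$ supplied by the diagram $\c$. A direct inspection of the Bousfield--Kan definition recalled in \S2 yields
$$
\diag S \;=\; \hoco_{I}\Delta\c .
$$

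The next step is to exhibit a natural isomorphism $\overline{W}S\cong \Delta\!\int_{I}\c$. Unpacking the codiagonal, a $p$-simplex of $\overline{W}S$ is a tuple $(t_{0,p},\dots,t_{p,0})$ with $t_{m,p-m}=(\sigma_m,F_m)$, $\sigma_m:[m]\to I$, $F_m:[p-m]\to\c_{\sigma_m 0}$ a lax functor, and $d_0^v t_{m,p-m}=d_{m+1}^h t_{m+1,p-m-1}$. The horizontal gluing forces the $\sigma_m$ to be the successive initial segments of a single functor $\sigma:[p]\to I$; the vertical gluing, combined with the transport by $d_0^h$, forces the collection of $F_m$ to amalgamate coherently, via the transition homomorphisms $\c(\sigma(i\!\le\! j))$ and their pseudofunctoriality 2-cells, into exactly the object/1-cell/2-cell data of a lax functor $[p]\to\int_I\c$. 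Conversely every such lax functor decomposes uniquely into these pieces by projection to $I$ and restriction to each fibre. Composing with \eqref{Phi} produces the desired
$$
\eta\;:\;\hoco_{I}\Delta\c\;=\;\diag S\;\xrightarrow{\;\Phi\;}\;\overline{W}S\;\cong\;\Delta{\textstyle\int}_{I}\c ,
$$
which is a natural weak homotopy equivalence because $\Phi$ is.

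The principal obstacle is the isomorphism $\overline{W}S\cong\Delta\!\int_{I}\c$: the bicategorical Grothendieck construction is intricate, and the $\overline{W}$-compatibility $d_0^v=d_{m+1}^h$ must be translated accurately into the coherence data (structure 2-cells and their pentagon/associativity axioms) of lax functors into $\int_I\c$. The book-keeping for the structural 2-cells of the transition homomorphisms $\c(u)$ and their interaction with the lax constraints of each $F_m$ is delicate, and is the one place where a verification cannot be avoided. Once this combinatorial dictionary is set up, naturality in $\c$, functoriality of $\int_I$, and the weak-equivalence property of $\eta$ follow formally from \eqref{Phi}.
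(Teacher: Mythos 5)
First, a point of reference: the paper does not actually prove Theorem \ref{t} --- it is quoted from \cite{Carrascoa} --- so the only internal benchmark is the proof of Theorem \ref{teor}, which implements exactly the strategy you propose (identify the homotopy colimit with $\diag S$ for a suitable bisimplicial set $S$, apply the weak equivalence $\Phi$ of (\ref{Phi}), and exhibit an explicit simplicial isomorphism from $\overline{W}S$ to the target nerve), only in the braided monoidal setting. Your overall plan is therefore the right one and matches the method used in this paper.

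There is, however, a concrete error in the setup that breaks your key step as written: the orientation of the bisimplicial set. You place the base direction horizontally, $S_{p,q}=\coprod_{\sigma:[p]\to I}\lfunc([q],\c_{\sigma 0})$, with the transport along the transition homomorphism carried by $d_0^h$. But the matching condition defining $\overline{W}S$ is $d_0^v t_{m,p-m}=d^h_{m+1}t_{m+1,p-m-1}$: it involves the zeroth \emph{vertical} face and the \emph{last} horizontal face, and the transporting face $d_0^h$ never occurs in it. Under your conventions the condition reads $\sigma_{m+1}d^{m+1}=\sigma_m$ and $F_{m+1}=F_m d^0$, so all the $\sigma_m$ take the same value at $0$, all the $F_m$ live in the single fibre $\c_{\sigma_0 0}$ as restrictions of $F_0:[p]\to\c_{\sigma_0 0}$, the transition homomorphisms never enter, and $\overline{W}S$ collapses to a re-indexed copy of $\diag S$ rather than $\Delta\int_I\c$. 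The sentence ``the vertical gluing, combined with the transport by $d_0^h$, forces the collection of $F_m$ to amalgamate\dots'' is thus not correct for your $S$. The repair is to transpose: take $S_{p,q}=\coprod_{G:[q]\to I}\lfunc([p],\c_{G0})$ with the transport placed on $d_0^v$, exactly as the paper does in Section 3; then the $\overline{W}$-condition becomes $G^{(p-m}d^0=G^{(p-m-1}$ on the base together with $G_{0,1}^{*}F^{(m}=F^{(m+1}d^{m+1}$ on the fibres, the fibre data genuinely accumulates across the varying fibres $\c_{Gm}$, and the dictionary with lax functors $[p]\to\int_I\c$ that you describe can be set up. (The identification $\diag S=\hoco_I\Delta\c$ is unaffected, since $\diag$ is symmetric in the two directions.) With that correction, and granting the nontrivial verification you yourself flag --- that the glued data satisfies precisely the coherence axioms of a lax functor into $\int_I\c$, the analogue of the paper's check that diagram (\ref{diag1}) for $\mathbb{G}'$ follows from (\ref{diag2}) for $\mathbb{G}$ --- your argument becomes the same as the one the paper carries out for Theorem \ref{teor}.
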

Ten different (but homotopically equivalent) ways of defining the classifying space of any bicategory have been shown in \cite[Theorem 6.1]{Carrascoa}. The above theorem  stablishes then how the classifying space of the bicategorical Grothendieck construction $\int_{I}\mathcal{C}$ can be thought of as a homotopy colimit of the classifying spaces of the bicategories $\mathcal{C}_i$ that arise from the initial input data $i\mapsto \mathcal{C}_i$ given by the diagram of bicategories $\mathcal{C}$.

 In the next subsection we show how Theorem \ref{t} provides a corresponding one for monoidal categories in which, besides, the homotopy colimit can be represented by a genuine simplicial set nerve of the diagram.

\subsection{Homotopy colimit theorem for monoidal categories}

In what follows $\mcat$ will denote the category of monoidal categories and monoidal functors between them.
Note that every monoidal category $\mathcal{M}=(\mathcal{M},\otimes,a,\text{I},l,r)$\ can be regarded as a bicategory $\Omega^{^{-1}}\hspace{-5pt} \mathcal{M}$\ with only one object $\ast$, whose morphisms are the objects of $\mathcal{M}$\ and whose deformations are the morphisms of  $\mathcal{M}$.
The horizontal composition is given by the tensor functor  $\otimes : \mathcal{M}\times \mathcal{M}\rightarrow \mathcal{M}$, the identity at the object $\ast$ is the unit object I of $\mathcal{M}$ and the associativity, left and right unit constraints for  $\Omega^{^{-1}}\hspace{-5pt} \mathcal{M}$ are just those of the monoidal category. This observation, due to J. Benabou \cite{Benabou1967}, that monoidal categories are essentially the same as bicategories with just one object, is known as the {\em delooping principle}, and the bicategory $\Omega^{^{-1}}\hspace{-5pt}\m$ is called the {\em delooping of the monoidal category} \cite[2.10]{Kapranov1994}. Thus we have the delooping embedding $\Omega^{^{-1}}:\mcat\rightarrow \bicat$.

If $\mathcal{M}: I^{op}\rightarrow \mcat$, $(j\overset{a}\to i)\mapsto   (\m_i\overset{a^*}\to \m_j)$, is a diagram of monoidal categories, it follows from Theorem \ref{t} that the homotopy type of $\m$ is modeled by the bicategory $\xymatrix{\int_I\!\Omega^{^{-1}}\hspace{-5pt}\m,}$  after the existence, according to (\ref{eta}), of a weak homotopy equivalence
\begin{equation}\label{whe}\eta: hocolim_{I}\Delta \Omega^{^{-1}}\hspace{-5pt}\m\rightarrow \Delta \int_{I}\Omega^{^{-1}}\hspace{-5pt}\m\,. \end{equation}

 But, as we shall see in detail below, the homotopy type of the diagram $\m$ can be also represented by a simplicial set associated to it, namely $ \mbox{Ner}_I\m$, the nerve of the diagram defined in \cite{Cegarra2007a}.

 Recall that a {\em \text{2}-cocycle} of $I$ with coefficients in $\m$ is  a
system of data $(Y,f)$ consisting of:

- For each arrow $j\overset{a}\to i$ in $I$, an object $Y_a\in
\m_j$.

- For each pair of composable arrows in  $I$, $k\overset{b}\to
j\overset{a}\rightarrow i$, a morphism in $\m_k$
$$\xymatrix{b^*Y_a\otimes Y_b\ar[r]^-{\textstyle f_{a,b}}&Y_{ab},}$$
such that  $Y_{1_j}=\mathrm{I}$ (the unit object of $\m_j$), the morphisms $f_{1,a}\!\!:a^*\mathrm{I}\otimes Y_a\to Y_a$ and $f_{a,1}\!\!:Y_a\otimes \mathrm{I} \to Y_a$ are the canonical isomorphisms given by the unit constrains of the monoidal category $\m_j$ and the monoidal functor $a^*$, and for any three composable triplet, $\ell\overset{c}\to k
\overset{b}\to j\overset{a}\to i$, of morphisms in $I$, the  coherence condition given by the commutativity of the following diagram in $\m_\ell$
\begin{equation}\label{coh}\xymatrix@C=50pt@R=25pt{(c^*b^*Y_a\otimes c^*Y_b)\otimes Y_c\ar[r]^{\cong}&c^*(b^*Y_a\otimes Y_b)\otimes Y_c\ar[r]^-{\textstyle c^*\!f_{a,b}\otimes 1}&c^*Y_{ab}\otimes Y_c \ar[d]^{\textstyle f_{ab,c}}\\c^*b^*Y_a\otimes(c^*Y_b\otimes Y_c)\ar[u]^{\cong}\ar[r]^-{\textstyle 1\otimes f_{b,c}}&c^*b^*Y_a\otimes Y_{bc}\ar[r]^-{\textstyle f_{a,bc}}& Y_{abc}}
\end{equation}
(where the unnamed isomorphisms are canonical) must hold.

Then,  $\mbox{Ner}_I\m$, the nerve of the diagram,  is defined as the simplicial set
\begin{equation}\label{4.6} \mbox{Ner}_I\m\!\!:\ [n]\mapsto \bigsqcup_{G\!:[n]\to I} Z^2\big([n],\m\,G\big)\,,
\end{equation}
where $G\!\!:[n]\to I$ is any functor and $Z^2\big([n], \m\,G\big)$ is the
set of $2$-cocycles of $[n]$ in the composite functor
$[n]\overset{G}\longrightarrow I\overset{\m}\longrightarrow \mcat$. Now we can prove the following:
\begin{proposition}
For any diagram of monoidal categories $\mathcal{M}: I^{op}\rightarrow \mcat$, there is a natural isomorphism of simplicial sets
$$ \mbox{Ner}_I\m \cong \Delta^{u} \int_{I}\Omega^{^{-1}}\hspace{-5pt}\m\,.$$
\end{proposition}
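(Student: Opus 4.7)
The plan is to exhibit, for each $n\ge 0$, an explicit bijection between the sets of $n$-simplices of the two simplicial sets and then verify that it commutes with the simplicial face and degeneracy operators.

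First I would unpack both sides. An $n$-simplex of $\Delta^{u}\int_{I}\Omega^{-1}\m$ is a normal (unitary) lax functor $F\!:[n]\to \int_I\Omega^{-1}\m$: a family of objects $F(i)$, of $1$-cells $F_{i,j}\!:F(j)\to F(i)$ for $i\le j$ with $F_{i,i}=\mathrm{id}$, and of structure $2$-cells $\chi_{i,j,k}\!:F_{i,j}\circ F_{j,k}\Rightarrow F_{i,k}$ satisfying the lax functor coherence and unit axioms. An $n$-simplex of $\mbox{Ner}_I\m$ is a pair $(G,(Y,f))$ with $G\!:[n]\to I$ a functor and $(Y,f)$ a $2$-cocycle of $[n]$ in $\m\,G$, i.e.\ objects $Y_{i,j}\in\m_{G(j)}$ and morphisms $f_{i,j,k}\!:G_{j,k}^{*}Y_{i,j}\otimes Y_{j,k}\to Y_{i,k}$, subject to the normalisation conditions and the coherence square \eqref{coh}.

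Second, I would describe the bicategory $\int_I \Omega^{-1}\m$ concretely. Its objects are those of $I$; because $\Omega^{-1}\m_i$ has a single $0$-cell, a $1$-cell from $j$ to $i$ is a pair $(a,X)$ with $a\in I(j,i)$ and $X\in\m_j$, horizontally composed by the rule $(a,X)\circ(b,Z)=(ab,\, b^{*}X\otimes Z)$; since $I$ is locally discrete, a $2$-cell $(a,X)\Rightarrow(a',X')$ exists only when $a=a'$, and is then a morphism $X\to X'$ in $\m_j$. With this description at hand, I would define the bijection as follows. Starting from $F$, set $G(i):=F(i)$ and write $F_{i,j}=(G_{i,j},Y_{i,j})$. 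Because $2$-cells in $\int_I\Omega^{-1}\m$ have matching first coordinates, $\chi_{i,j,k}$ forces $G_{i,j}G_{j,k}=G_{i,k}$, so $G$ is a functor $[n]\to I$; the second coordinate of $\chi_{i,j,k}$ is then a morphism $f_{i,j,k}\!:G_{j,k}^{*}Y_{i,j}\otimes Y_{j,k}\to Y_{i,k}$ in $\m_{G(k)}$. The normality of $F$ gives $G_{i,i}=1$, $Y_{i,i}=\mathrm{I}$ and the canonical form for $f_{i,i,j}$ and $f_{i,j,j}$. Conversely, every pair $(G,(Y,f))$ reassembles into a unique normal lax functor $F$ via the same recipe.

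Finally, I would check simpliciality: a map $\alpha\!:[m]\to[n]$ in $\Delta$ acts on the left side by precomposition $F\mapsto F\circ\alpha$, and on the right side by $(G,(Y,f))\mapsto(G\alpha,\alpha^{*}(Y,f))$, and these two actions match tautologically under the bijection. The main obstacle will be the bookkeeping in the previous step: verifying that the associativity coherence of $F$ (the lax functor pentagon in $\int_I\Omega^{-1}\m$) unpacks exactly into the $2$-cocycle coherence diagram \eqref{coh}. This requires carefully writing out the associator of the bicategorical Grothendieck construction, which is built from the associators of the monoidal categories $\m_j$ and the monoidal-functor constraints of the $a^{*}$; once this is done, the unnamed canonical isomorphisms in \eqref{coh} are precisely what appears, and the two coherence axioms become literally the same diagram. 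The rest of the proof is then formal.
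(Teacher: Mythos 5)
Your proposal is correct and follows essentially the same route as the paper: both arguments unpack the bicategory $\int_I\Omega^{^{-1}}\hspace{-3pt}\m$ explicitly (objects indexed by $\mathrm{Ob}\,I$, $1$-cells $(X,a)$ with composition $b^{*}X\otimes Z$, $2$-cells only between $1$-cells with equal $I$-components), identify the data of a normal lax functor $[n]\to\int_I\Omega^{^{-1}}\hspace{-3pt}\m$ with a functor $G\!:[n]\to I$ together with a $2$-cocycle of $[n]$ in $\m G$, and match the lax-functor coherence axiom with the cocycle condition (\ref{coh}). The paper carries this out dimension by dimension rather than stating the bijection once uniformly, but the content is identical.
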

 \begin{proof}
 The isomorphism will be clear after describing the simplices of both simplicial sets in each dimension.

 First we observe that, according to the general construction of the bicategorical Grothendieck construction \cite{Carrascoa}, the bicategory $\int_{I}\Omega^{^{-1}}\hspace{-5pt}\m$ has, as objects, pairs $(\ast_i,i)$ where $i\in \text{Ob}I$ and $\ast_i$ is the unique object of $\Omega^{^{-1}}\hspace{-5pt}\m_i$. Thus $\text{Ob}\int_{I}\Omega^{^{-1}}\hspace{-5pt}\m\cong \text{Ob}I$. A morphism from $(\ast_j,j)$ to $(\ast_i,i)$ is a pair $(X,a)$, where $a:j\rightarrow i$ is a morphism in $I$ and $X\in \text{Ob}\m_j$, and a 2-cell from $(X,a)$ to $(Y,a)$ is just a morphism $\alpha:X\rightarrow Y$ in $\m_j$. The horizontal composition functor is given by:\\
$$\xymatrix@C=0,16cm{
(\ast_k,k)\ar@/^5mm/[rr]^{(Z,b)}\ar@/_5mm/[rr]_{(T,b)}& \Downarrow \beta & (\ast_j,j)\ar@/^5mm/[rr]^{(X,a)} \ar@/_5mm/[rr]_{(Y,\sigma)}& \Downarrow \alpha & (\ast_i,i) \ar@{|->}[rr]^{\circ}&\empty& (\ast_k,k)\ar@/^5mm/[rr]^{(b^*X\otimes Z,a\,b)}\ar@/_5mm/[rr]_{(b^*Y\otimes T,a\,b)}& \Downarrow b^*\alpha \otimes \beta & (\ast_i,i)
}$$ where $b^*:\Omega^{^{-1}}\hspace{-5pt}\m_j\rightarrow \Omega^{^{-1}}\hspace{-5pt}\m_k$ is the induced homomorphism by $b:k\rightarrow j$.\\

  Then, the unitary geometric nerve of the bicategory $\int_{I}\Omega^{^{-1}}\hspace{-5pt}\m$\ is the simplicial set whose $n$-simplices, the normal lax functors $[n]\rightarrow \int_{I}\Omega^{^{-1}}\hspace{-5pt}\m$,
are described as follows ( see \cite[Lemma 4.2]{Carrascoa}):\\
- In dimension zero, $(\Delta^u \int_{I}\Omega^{^{-1}}\hspace{-5pt}\m)_0=\text{Ob}\int_{I}\Omega^{^{-1}}\hspace{-5pt}\m\cong \text{Ob}I$

\noindent - The 1-simplices $F:[1]\rightarrow \int_{I}\Omega^{^{-1}}\hspace{-5pt}\m$ of $\Delta^u \int_{I}\Omega^{^{-1}}\hspace{-5pt}\m$ are the morphisms in $\int_{I}\Omega^{^{-1}}\hspace{-5pt}\m$ from $F1=(\ast_{F1},F1)$ to $F0=(\ast_{F0},F0)$, that is,  pairs  $(X_{0,1},F_{0,1})$
where $X_{0,1}:\ast_{F1} \rightarrow \ast_{F1}$ is a morphism in $\Omega^{^{-1}} \hspace{-5pt}\m_{F1}$ (i.e., an object of $\m_{F1}$)
and  $F_{0,1}:F1\rightarrow F0$\ is a morphism in $I$.

\noindent - The 2-simplices $F:[2]\rightarrow \int_{I}\Omega^{^{-1}}\hspace{-5pt}\m$ are triangles
$$\xymatrix{
\empty & (\ast_{F1},F1) \ar[dl]_{(X_{0,1},F_{0,1})} \ar@2[d]^{\widehat{F}_{0,1,2}}& \empty\\
(\ast_{F0},F0) & \empty & (\ast_{F2},F2) \ar[ll]^{(X_{0,2},F_{0,2})}\ar[ul]_{(X_{1,2},F_{1,2})}}$$
where $\widehat{F}_{0,1,2}: (X_{0,1},F_{0,1})\circ (X_{1,2},F_{1,2})\Rightarrow (X_{0,2},F_{0,2})$\ is a deformation in $\int_{I}\Omega^{^{-1}}\hspace{-5pt}\m$. Then, $\widehat{F}_{0,1,2}: (F_{1,2}^*X_{0,1}\otimes X_{1,2},F_{0,1}F_{1,2})\Rightarrow (X_{0,2},F_{0,2})$ and therefore
 $F_{0,1}F_{1,2}=F_{0,2}$ and $\widehat{F}_{0,1,2}$ is just  a  deformation $F_{0,1,2}:F_{1,2}^*X_{0,1}\otimes X_{1,2}\rightarrow X_{0,2}$\  in $\Omega^{^{-1}}\hspace{-5pt}\m_{F2}$, that is,  a morphism in $\m_{F2}$ from $F_{1,2}^*X_{0,1}\otimes X_{1,2}$ to $ X_{0,2}$.

 Then, a 2-simplex of $\Delta^u \int_{I}\Omega^{^{-1}}\hspace{-5pt}\m$\ is a tuple $(F0,F1,F2,F_{0,1},F_{0,2},F_{1,2},F_{0,1,2})$
 where $Fi$, $i=0,1,2$, are objects of $I$, $F_{i,j}:Fj\rightarrow Fi$, $0\leq i< j \leq 2$, are morphisms in $I$ such that $F_{0,1}F_{1,2}=F_{0,2}$ and $F_{0,1,2}:F_{1,2}^*X_{0,1}\otimes X_{1,2}\rightarrow X_{0,2}$ is a morphism in $\m_{F2}$ with $X_{0,1}\in \text{Ob}\,\m_{F1}$ and $X_{1,2},X_{0,2}\in \text{Ob}\,\m_{F2}$.

- If $n\geq 3$,  a $n$-simplex $F:[n]\rightarrow \int_{I}\Omega^{^{-1}}\hspace{-5pt}\m$  is determined by
objects $Fi$\ of $I$, $ 0 \leq i \leq n$; for any $0\leq i<j\leq n$, by objects $X_{i,j}\in \text{Ob}\,\m_{Fj}$ and morphisms in $I$, $F_{i,j}: Fj \rightarrow Fi$, such that, for any $0\leq i<j<k\leq n$, $F_{i,j}\,F_{j,k}=F_{i,k}$; and, for any $0\leq i<j<k\leq n$, by morphisms in $\m_{Fk}$, $F_{i,j,k}: F_{j,k}^* X_{i,j} \otimes X_{j,k} \rightarrow X_{i,k}$ such that, for any $0\leq i< j< k < \ell \leq n$, the following diagram in $\m_{F\ell}$, where `can' denotes a composite of canonical isomorphisms, is commutative:

$$\xymatrix@C=47pt{
F_{k,\ell}^*F_{j,k}^* X_{i,j}\otimes F_{k,\ell}^*X_{j,k}\otimes X_{k,\ell}\ar[d]_{\text{can}}\ar[rr]^{1\otimes F_{j,k,\ell}}& \empty & F_{j,\ell}^* X_{i,j}\otimes X_{j,\ell} \ar[d]^{F_{i,j,\ell}}\\
F_{k,\ell}^*(F_{j,k}^*X_{i,j}\otimes X_{j,k})\otimes X_{k,\ell} \ar[r]^-{F^{*}_{k,\ell}(F_{i,j,k})\otimes 1} & F_{k,\ell}^*X_{i,k}\otimes X_{k,\ell} \ar[r]^{F_{i,k,\ell}} & X_{i,\ell}
}$$

The whole data giving the normal lax functor $F:[n]\rightarrow \int_{I}\Omega^{^{-1}}\hspace{-5pt}\m$ are obtained by putting $X_{i,i}=I$, $0\leq i\leq n$, where $I$ is the unit object of $\m_{Fi}$, and $F_{i,i,j}:F_{i,j}^*I\otimes X_{i,j}\rightarrow X_{i,j}$ and $F_{i,j,j}:X_{i,j}\otimes I\rightarrow X_{i,j}$, $0\leq i<j\leq n$,  given by canonical (left, right, unit) constraints of the monoidal category $\m_{Fj}$.

As for the n-simplices of $Ner_{I}\m$, $$\bigsqcup_{G:[n]\to I} Z^2\big([n],\m\,G\big)\,,$$ they can be described as follows:\\
\noindent - A 0-simplex consist of a functor $G:[0]\rightarrow I$ and a 2-cocycle of $[0]$ with coefficients in $\m G$. Thus it is determined by the objects $G0$ of $I$ and therefore $$(Ner_{I}\m)_0\cong  \text{Ob}(I)\cong (\Delta^u \int_{I}\Omega^{^{-1}}\hspace{-5pt}\m)_0\,.$$

\noindent - A 1-simplex is given by a functor $G:[1]\rightarrow I$ and a 2-cocycle of $[1]$ with coefficients in $\m G$. A functor $G:[1]\rightarrow I$ is just a 1-simplex of the nerve of the category $I$, that is, a system $(Gi, G_{i,j}:G j \rightarrow G i)$, where $Gi$, $i=0,1$, are objects of $I$ and $G_{i,j}$, $0\leq i\leq j\leq 1$, is a morphism in $I$ with $G_{i,i}=1_{Gi}$, $i=0,1$. A 2-cocycle of [1] with coefficients is $\m G$ consists of objects $Y_{i,j}\in\text{Ob}\m_{Gj}$, $0\leq i\leq j\leq 1$, with $Y_{i,i}=\mathrm{I}$ the unit object of $\m_{Gi}$. Thus an 1-simplex consist of a system of data $\left\{(G i, G_{i,j}: G j\rightarrow G i, Y_{i,j})\right\}$, $0\leq i\leq j\leq 1$, as above and therefore
$$(Ner_{I}\m)_1\cong   (\Delta^u \int_{I}\Omega^{^{-1}}\hspace{-5pt}\m)_1\,.$$

\noindent - A 2-simplex is given by a functor $G: [2]\rightarrow I$ and a 2-cocycle of $[2]$ with coefficients in $\m G$. A functor $G:[2]\rightarrow I$ is just a 2-simplex of the nerve of the category $I$, that is, a system
  $(Gi, G_{i,j}:G j \rightarrow G i)$, where $Gi$, $i=0,1,2$, are objects of $I$ and $G_{i,j}$, $0\leq i\leq j\leq 2$, is a morphism in $I$ such that the equation $G_{i,j} G_{j,k}= G_{i,k}$ holds for $0\leq i\leq j\leq k\leq 2$, with $G_{i,i}=1_{Gi}$, $i=0,1,2$.
 A 2-cocycle of $[2]$ whit coefficients in $ \m G$ is a system
$(Y_{i,j},G_{i,j,k}: G_{j,k}^*Y_{i,j}\otimes Y_{j,k}\rightarrow Y_{i,k})$ where $Y_{i,j}\in \text{Ob}\m_{Gj}$, $0\leq i\leq j\leq 2$, with $Y_{i,i}=\mathrm{I}$ the unit object of $\m_{Gi}$, and where $G_{i,j,k}$, $0\leq i\leq j\leq k\leq 2$, are morphisms in $\m_{G k}$ such that, apart from $G_{0,1,2}:G_{1,2}^*Y_{0,1}\otimes Y_{1,2}\rightarrow Y_{0,2}$,  $G_{i,i,j}$ and $G_{i,j,j}$, for any $0\leq i<j\leq 2$, are given by  canonical constraints.
 All together, we have that
$$(Ner_{I}\m)_2\cong   (\Delta^u \int_{I}\Omega^{^{-1}}\hspace{-5pt}\m)_2\,.$$

Finally,

\noindent - if $n\geq 3$, a $n$-simplex of $Ner_I\m$\ is given, on the one hand, by
 a functor $G: [n]\rightarrow I$, that is, a system of data $(Gi, G_{i,j}: Gj \rightarrow G i)_{0 \leq i<j\leq n}$,
where  $G i\in\text{Ob}I$  and $G_{i,j}$ are morphisms in $I$ such that $G_{i,j} G_{j,k}= G_{i,k}$ for $0\leq i< j< k\leq n$ and $G_{i,i}= 1_{Gi}$ for $0\leq i<j<k\leq n$. On the other hand,
 a 2-cocycle of $[n]$ with coefficients in $\m G$ is given by the system of data $(Y_{i,j},G_{i,j,k}: G_{j,k}^*Y_{i,j}\otimes Y_{j,k}\rightarrow Y_{i,k})_{0\leq i<j<k \leq n}$  where $Y_{i,j}\in \text{Ob}\m_{Gj}$, with $Y_{i,i}=\mathrm{I}$, the unit object of $\m_{Gi}$, for each $0\leq i\leq n$, and $G_{i,j,k}$\ are morphisms in $\m_{G k}$ such that, for any $0\leq i<j\leq n$, $G_{i,i,j}$ and $G_{i,j,j}$ are given by the left and right constraints of  the monoidal category  $\m_{G j}$ for the object $Y_{i,j}$. Moreover, for all $0 \leq i\leq j\leq k\leq \ell\leq n $, the coherence condition of (\ref{coh}) must hold, that is, the following diagram  must be commutative:

 $$\xymatrix@C=47pt{
G_{k,\ell}^*G_{j,k}^* Y_{i,j}\otimes G_{k,\ell}^*Y_{j,k}\otimes Y_{k,\ell}\ar[d]_{\text{can}}\ar[rr]^{1\otimes G_{j,k,\ell}}& \empty & G_{j,\ell}^* Y_{i,j}\otimes Y_{j,\ell} \ar[d]^{G_{i,j,\ell}}\\
G_{k,\ell}^*(G_{j,k}^*Y_{i,j}\otimes Y_{j,k})\otimes Y_{k,\ell} \ar[r]^-{G^{*}_{k,\ell}(G_{i,j,k})\otimes 1} & G_{k,\ell}^*Y_{i,k}\otimes Y_{k,\ell} \ar[r]^{G_{i,k,\ell}} & X_{i,\ell}\,.
}$$
 Then, for all $n\geq 0$, $$(Ner_{I}\m)_n\cong   (\Delta^u \int_{I}\Omega^{^{-1}}\hspace{-5pt}\m)_n$$
and so we have the announced isomorphism.

 \end{proof}

Since the weak homotopy equivalence (\ref{whe}) remains valid taking unitary geometric nerves instead of geometric nerves, the above proposition gives us:

\begin{theorem}\label{teomon} For any diagram of monoidal categories  $\mathcal{M}: I^{op}\rightarrow \mcat$ there is a natural weak homotopy equivalence
$$\label{result}\eta: hocolim_{I}\Delta^{u} \Omega^{^{-1}}\hspace{-5pt}\m\rightarrow \Delta^{u} \int_{I}\Omega^{^{-1}}\hspace{-5pt}\m\cong \mbox{Ner}_I\m  $$
where $\Delta^u \Omega^{^{-1}}\hspace{-5pt}\m: I^{op}\rightarrow \sset$\ is the diagram of simplicial sets obtained by composition of $\m$ with the unitary geometric nerve functor $\Delta^u: \bicat \rightarrow \sset$.
\end{theorem}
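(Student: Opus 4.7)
The plan is to chain together three ingredients already available in the excerpt: the bicategorical Thomason theorem (Theorem \ref{t}), the comparison between the unitary and the full geometric nerves of a bicategory, and the isomorphism $\mbox{Ner}_I\m \cong \Delta^u \int_I\Omega^{^{-1}}\hspace{-5pt}\m$ just established in the preceding Proposition.

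First I would pass through the delooping embedding $\Omega^{^{-1}}:\mcat\to\bicat$, obtaining the bicategorical diagram $\Omega^{^{-1}}\hspace{-5pt}\m:I^{op}\to\bicat$. Applying Theorem \ref{t} to this diagram produces the natural weak homotopy equivalence $\eta:\mathrm{hocolim}_I\Delta\,\Omega^{^{-1}}\hspace{-5pt}\m\to \Delta\int_I\Omega^{^{-1}}\hspace{-5pt}\m$ recorded as (\ref{whe}).

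Next I would exchange the non-unitary geometric nerve $\Delta$ for the unitary one $\Delta^u$ on both sides. For any bicategory $\c$, the natural inclusion $\Delta^u\c\hookrightarrow \Delta\c$ is a weak homotopy equivalence; this is part of the classification of the ten equivalent classifying-space constructions in \cite[Theorem 6.1]{Carrascoa}, so I would cite it rather than reprove it. Applied levelwise to the diagram $\Omega^{^{-1}}\hspace{-5pt}\m$, this inclusion yields a pointwise weak equivalence of diagrams of simplicial sets, hence a weak equivalence of their homotopy colimits, while the analogous replacement on the right-hand side turns $\Delta\int_I\Omega^{^{-1}}\hspace{-5pt}\m$ into $\Delta^u\int_I\Omega^{^{-1}}\hspace{-5pt}\m$. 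The naturality of the comparison ensures that these substitutions are compatible with $\eta$, producing a natural weak homotopy equivalence $\mathrm{hocolim}_I\Delta^u\Omega^{^{-1}}\hspace{-5pt}\m\to \Delta^u\int_I\Omega^{^{-1}}\hspace{-5pt}\m$.

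Finally, I would splice in the isomorphism $\Delta^u\int_I\Omega^{^{-1}}\hspace{-5pt}\m\cong \mbox{Ner}_I\m$ supplied by the preceding Proposition to identify the target with the nerve of the diagram, yielding the stated $\eta$. The only step that merits any care is the passage $\Delta\leadsto\Delta^u$: one must know that the inclusion is a pointwise weak equivalence on bicategories and invoke homotopy invariance of $\mathrm{hocolim}$ along pointwise equivalences. Neither point is problematic given the references cited, so the overall proof should amount to a short assembly of previously established results.
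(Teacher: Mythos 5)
Your proposal is correct and follows essentially the same route as the paper, which simply observes that the weak equivalence (\ref{whe}) "remains valid taking unitary geometric nerves instead of geometric nerves" and then invokes the preceding Proposition. Your added justification of the $\Delta\leadsto\Delta^u$ step (pointwise weak equivalence of the inclusion $\Delta^u\mathcal{C}\hookrightarrow\Delta\mathcal{C}$ plus homotopy invariance of the homotopy colimit) is a reasonable way to fill in the detail the paper leaves to the reference \cite{Carrascoa}.
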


 In this way, the geometric realization of $\mbox{Ner}_I\m$ can be thought of as the homotopy colimit of the classifying spaces of the monoidal categories $\m_i $ \ given by the initial data.

\section{Homotopy colimit theorem for braided monoidal categories}
    In this section we will give an extension of Theorem \ref{teomon} to braided monoidal categories.
Recall that a \textit{braided monoidal category}, $ \mathcal{M}=( \mathcal{M}, \otimes, \boldsymbol{c} ) $,  consists of a monoidal category $\mathcal{M}=(\mathcal{M}, \otimes,a,\mathrm{I},l,r)$ together with a \textit{braiding}, that is, a family of natural isomorphisms $\boldsymbol{c}=\boldsymbol{c}_{X,Y}: X \otimes Y \rightarrow Y\otimes X$, $X,Y\in \text{Ob}\m$, satisfying suitable coherence conditions \cite{Joyal1993}.
The category of  braided monoidal categories, and braided monoidal functors between them, will be denoted by $\bmcat$.\\
Any braided monoidal category $ \mathcal{M}=( \mathcal{M}, \otimes, \boldsymbol{c} ) $ defines \cite{Berger1999, Kapranov1994}, by the categorical delooping principle, a one-object (say $\ast$), one-arrow tricategory $\Omega^{^{-2}}\hspace{-5pt}\m$ where the objects of $\m$ are the 2-cells and the morphisms are the 3-cells (thus $\Omega^{^{-2}}\hspace{-5pt}\m(\ast,\ast)=\Omega^{^{-1}}\hspace{-5pt}\m$ and  the braiding provides the interchange 3-cell between the two different composites of 2-cells).

If $\b:I^{op}\rightarrow \bmcat$ is a diagram of braided monoidal categories, the above double delooping construction determines a diagram of tricategories. A hypothetical Grothendieck construction for such a diagram, together with the consideration of suitable nerves, should allow the direct extension of Thomason's homotopy colimit theorem to braided monoidal categories. However, paralleling the monoidal case, we can avoid that Grothendieck construction to measure the homotopy colimit of the diagram, of simplicial sets, obtained from composing the diagram $\b$ with the geometric nerve of braided monoidal categories \cite[Definition 6.7]{CarCG-2010}. This is carried out by using the notion of nerve of a pseudofunctor of braided monoidal categories introduced in \cite{Cegarra2007a}. In the particular case of considering a diagram $\b:I^{op}\rightarrow \bmcat$, we shall prove  that the homotopy type of  the diagram can be represented by its nerve, $Ner_{I}\b$, a simplicial set that,  below, we recall.

A \textit{3-cocycle} of $I$\ with coefficients in $\b$\ is a system of data $(Y,f)$\ consisting of:\\
\noindent- For each two composable arrow in $I$, $k\xrightarrow{\tau} j \xrightarrow{\sigma} i$, an object $Y_{\sigma,\tau}$ of $\b_k$.\\
\noindent- For each three composable arrows in $I$, $\ell\xrightarrow{\gamma}k\xrightarrow{\tau} j \xrightarrow{\sigma} i$,
 a morphism of $\b_{\ell}$ $$f_{\sigma,\tau,\gamma}: \gamma^* Y_{\sigma,\tau}\otimes Y_{\sigma\tau,\gamma} \rightarrow Y_{\tau, \gamma} \otimes  Y_{\sigma,\tau\gamma}$$ such that,
 for any four composable arrows in $I$, $m\xrightarrow{\delta}\ell\xrightarrow{\gamma}k\xrightarrow{\tau} j \xrightarrow{\sigma} i$,
  the following diagram in $\b_{m}$ (in which we have omitted the associativity constraints) is commutative
$$\xymatrix{
\delta^*(\gamma^* Y_{\sigma,\tau}\otimes Y_{\sigma\tau,\gamma})\otimes Y_{\sigma\tau\gamma,\delta,}\ar[d]_{\delta^*f_{\sigma,\tau,\gamma}\otimes 1} \ar[r]^{\text{can}} &\delta^*\gamma^*Y_{\sigma,\tau}\otimes \delta^*Y_{\sigma\tau,\gamma}\otimes Y_{\sigma\tau\gamma,\delta}\ar[d]^{1\otimes f_{\sigma\tau,\gamma,\delta}} \\
\delta^* (Y_{\tau,\gamma}\otimes Y_{\sigma, \tau\gamma})\otimes Y_{\sigma\tau\gamma,\delta}\ar[d]_{\text{can}} &\delta^*\gamma^*Y_{\sigma,\tau}\otimes Y_{\gamma,\delta}\otimes Y_{\sigma\tau,\gamma\delta}\ar[d]^{\boldsymbol{c}\otimes 1} \\
\delta^*Y_{\tau,\gamma}\otimes \delta^* Y_{\sigma,\tau\gamma}\otimes Y_{\sigma\tau\gamma,\delta}\ar[d]_{1\otimes f_{\sigma,\tau\gamma,\delta}} & Y_{\gamma,\delta}\otimes \delta^*\gamma^*Y_{\sigma,\tau}\otimes Y_{\sigma\tau,\gamma\delta}\ar[d]^{1\otimes f_{\sigma,\tau,\gamma\delta}}\\
\delta^*Y_{\tau,\gamma}\otimes Y_{\tau\gamma,\delta}\otimes Y_{\sigma, \tau\gamma\delta}\ar[r]_{f_{\tau,\gamma,\delta}\otimes 1} & Y_{\gamma,\delta}\otimes Y_{\tau,\gamma\delta}\otimes Y_{\sigma,\tau\gamma\delta}}$$ \\
and, moreover,  $Y_{1,\sigma}=\mathrm{I}=Y_{\sigma,1}$, $f_{\sigma,\tau,1}= \boldsymbol{c}_{_{Y_{\sigma,\tau},\mathrm{I}}}:Y_{\sigma,\tau}\otimes \mathrm{I}\rightarrow \mathrm{I}\otimes Y_{\sigma,\tau}$, $f_{\sigma, 1,\gamma}:\gamma^*\mathrm{I}\otimes Y_{\sigma,\gamma}\rightarrow \mathrm{I}\otimes Y_{\sigma,\gamma}$ is the composite of $ 1_{\mathrm{I} \otimes Y_{\sigma,\gamma}}$ with the unit constraint of the monoidal functor $\gamma^*$ and $f_{1,\tau,\gamma}:\gamma^*\mathrm{I}\otimes Y_{\tau,\gamma}\rightarrow Y_{\tau,\gamma}\otimes \mathrm{I}$ is the composite of $\boldsymbol{c}_{_{\mathrm{I},Y_{\tau,\gamma}}}$ with the unit constraint of the monoidal functor $\gamma^*$.

Then, the nerve $Ner_I\b$ of the diagram is  defined as the simplicial set, \cite{Cegarra2007a},
\begin{equation}\label{nerdiag}
Ner_{I}\b:\left[ n \right] \ \mapsto \underset{G: [n]\rightarrow I}{\coprod}  Z^3([n],\b G)\end{equation}
where $G: [n]\rightarrow I$\ is any functor and $Z^3([n],\b G)$\ is the set of 3-cocycles of $[n]$\ with coefficients in the composite functor $[n]\xrightarrow{G} I\xrightarrow{\b} \bmcat$. Thus, an n-simplex can be described as  a system of data
$$\mathbb{G}=\left\{ G_{i,j,k},\ G_{i,j,k,\ell}\right\}_{0\leq i\leq j\leq k\leq \ell\leq n} $$
where:

 \noindent - $G_{i,j,k}\in \text{Ob}\b_{G k}$ for $0\leq i\leq j \leq k\leq n$, with $G_{i,i,j}=G_{i,j,j}=\mathrm{I} $, the unit object of the braided monoidal category $\b_{Gj}$.

\noindent - $G_{i,j,k,\ell}:G_{k,\ell}^{*}G_{i,j,k}\otimes G_{i,k,\ell}\rightarrow G_{j,k,\ell}\otimes G_{i,j,\ell}$, for $0\leq i\leq j\leq k \leq \ell \leq n$, is a morphism in $\b_{G \ell}$, where $G_{k,\ell}^*:\b_{Gk}\rightarrow \b_{G\ell}$\ is the braided monoidal functor associated to the morphism $\ell\rightarrow k$\ of $[n]$, with
$G_{i,i,j,k}$ the composite of $\boldsymbol{c}_{\mathrm{I},G_{i,j,k}}$ with the unit constraint of $G_{j,k}^*$, $G_{i,j,j,k}$ the composite of $1_{\mathrm{I}\otimes G_{i,j,k}}$ with the unit constraint of $G_{j,k}^*$ and $G_{i,j,k,k}=\boldsymbol{c}_{G_{i,j,k},\mathrm{I}}$.

Moreover, for all $0\leq i\leq j\leq k\leq \ell\leq m \leq n$, the following diagram in $\b_{G m}$ must be commutative:\\
\begin{equation}\label{diag1}\xymatrix{
G_{\ell,m}^*(G_{k,\ell}^* G_{i,j,k}\otimes G_{i,k,\ell})\otimes G_{i,\ell,m}\ar[d]_{G_{\ell,m}^*G_{i,j,k,\ell}\otimes 1} \ar[r]^-{\text{can}} &  G_{\ell,m}^*G_{k,\ell}^*G_{i,j,k}\otimes G_{\ell,m}^*G_{i,k,\ell}\otimes G_{i,\ell,m}\ar[d]^{1\otimes G_{i,k,\ell,m}}\\
G_{\ell,m}^* (G_{j,k,\ell}\otimes G_{i,j,\ell})\otimes G_{i,\ell,m}\ar[d]_{\text{can}} & G_{\ell,m}^*G_{k,\ell}^*G_{i,j,k}\otimes G_{k,\ell,m}\otimes G_{i,k,m}\ar[d]^{\boldsymbol{c}\otimes 1}\\
G_{\ell,m}^*G_{j,k,\ell}\otimes G_{\ell,m}^* G_{i,j,\ell}\otimes G_{i,\ell,m}\ar[d]_{1\otimes G_{i,j,\ell,m}} &  G_{k,\ell,m}\otimes G_{k,m}^*G_{i,j,k}\otimes G_{i,k,m}\ar[d]^{1\otimes G_{i,j,k,m}} \\
G_{\ell,m}^*G_{j,k,\ell}\otimes G_{j,\ell,m}\otimes G_{i,j,m}\ar[r]_{G_{j,k,\ell,m}\otimes 1} & G_{k,\ell,m}\otimes G_{j,k,m}\otimes G_{i,j,m}\,.}
 \end{equation}

Particularly, if the diagram $\b:I^{op}\rightarrow \bmcat$ is  constant a  braided monoidal category  $ \mathcal{M}=( \mathcal{M}, \otimes, \boldsymbol{c})$, the notion of 3-cocycle of $I$ with coefficients in $\b$ is just that of 3-cocycle of $I$ in $\m$ (\cite[Definition 6.6]{CarCG-2010}) and, in this case, $Ner_{I}\b$ is the geometric nerve, $Ner\m=Z^3(\m,\otimes,\boldsymbol{c})$, of the braided monoidal category $\m$ (\cite[Definition 6.7]{CarCG-2010})
that is, the simplicial set:
$$ \begin{aligned}
Ner\mathcal{M}:\ & \Delta^{op} \rightarrow Set\\
& \left[ n \right]\ \mapsto Z^{3}([n],\mathcal{M})
\end{aligned}$$
where $Z^{3}([n],\mathcal{M})$\ is the set of 3-cocycles of $[n]$\ in $\mathcal{M}$. This defines the \textit{geometric nerve functor of braided monoidal categories}, $Ner: \bmcat \rightarrow \sset $, which associates with each braided monoidal category $\mathcal{M}$ its  geometric nerve $Ner \mathcal{M}$.

For any diagram $\b: I^{op}\rightarrow \bmcat$ we can consider the bisimplicial set
$$S= \coprod_{G \in Ner I}Ner \b_{G 0}= \coprod_{G: \left[ q \right]\rightarrow I}Z^{3}([p],\b_{G 0})$$
 whose$(p,q)$-simplices  are pairs $(\mathbb{G},G)$\ where $G : \left[ q \right] \rightarrow I$\ is a functor and $\mathbb{G}: \left[ p \right] \rightarrow \b_{G 0}$\ is a 3-cocycle of $[p]$\ in $\b_{G 0}$.
If $\alpha: \left[ p' \right]\rightarrow \left[p\right]$\ and $\beta: \left[ q' \right]\rightarrow \left[ q \right] $\  are maps in the simplicial category, then the respective horizontal and vertical induced maps are defined by:
$$
\alpha^{*h}(\mathbb{G},G)=(\mathbb{G}\alpha,G)\,\,;\,\,
\alpha^{*v}(\mathbb{G},G)=(G^{*}_{0,\beta 0}\mathbb{G}, G \beta)
$$
where $G\beta: \left[ q'\right]\xrightarrow{\beta}\left[ q\right]\xrightarrow{G}\mathcal{I}$, $\mathbb{G}\alpha : \left[ p' \right]\xrightarrow{\alpha}\left[ p \right]\xrightarrow {\mathbb{G}}\b_{G 0}$ is a 3-cocycle of $\left[ p'\right]$\ in $\b_{G 0}$\ and $G_{0,\beta 0}^{*}\mathbb{G}: \left[ p \right]\rightarrow \b_{G\beta 0} $\ is the 3-cocycle of $\left[ p \right]$\ in $\b_{G\beta 0}$\ obtained by  composing  $\mathbb{G}$\ with the braided monoidal functor $G^{*}_{0,\beta 0}$\ associated to the morphism $G\beta 0 \rightarrow G 0$\ of $I$.
In particular, the horizontal and vertical faces of $S$ are given by
 $$d_i^{h} (\mathbb{G},G)=(\mathbb{G} d^i , G)\,,\, 0\leq i\leq p\,,$$ and $$d_j^{v} (\mathbb{G},G)=(\mathbb{G}, G d^j)\,, 1\leq j \leq q\,, \,\,\text{while}\,\,\, d_0^v(\mathbb{G},G)=(G_{0,1}^*\mathbb{G},Gd^0)\,.
 $$

The diagonal of this bisimplicial set is just the homotopy colimit, $hocolim_{I}Ner\b$, of the diagram of simplicial sets
 $I^{op}\xrightarrow{\b} \bmcat \xrightarrow{Ner} \sset$.
Thus,  $hocolim_I Ner\mathcal{C}=\text{diag}\,S$\ is the simplicial set whose n-simplices are pairs $(\mathbb{G},G)$\ where $G : \left[ n \right] \rightarrow I$\ is a functor and $\mathbb{G}: \left[ n \right] \rightarrow \b_{G 0}$\ is a 3-cocycle of $[n]$\ in $\b_{G 0}$.\\

Now we are ready to prove our main result:

\begin{theorem}\label{teor} (\emph{Homotopy Colimit Theorem for braided monoidal categories})
For any diagram of braided monoidal categories $\b: I^{op}\rightarrow \bmcat$\ there exists a natural weak homotopy equivalence of simplicials sets
\begin{eqnarray} \eta : hocolim_{I} Ner \b\rightarrow Ner_{I}\b \end{eqnarray}
where $Ner \b: I^{op}\rightarrow \sset$\ is the diagram of simplicial sets, obtained by the composition of the diagram $\b$\ with the geometric nerve functor of braided monoidal categories $Ner : \bmcat\rightarrow \sset$, and $Ner_{I}\b$ is the nerve (\ref{nerdiag}) of the diagram.
\end{theorem}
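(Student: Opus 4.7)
The plan is to show that $Ner_I\b$ is naturally isomorphic to the codiagonal $\overline{W}S$ of the bisimplicial set $S$ described above (whose diagonal is $hocolim_I Ner\b$), and then take $\eta$ to be the composite
$$
hocolim_I Ner\b=\diag S\xrightarrow{\ \Phi\ }\overline{W}S\cong Ner_I\b\,.
$$
Since the comparison map $\Phi$ of (\ref{Phi}) is a natural weak homotopy equivalence, so is $\eta$. This follows the same overall strategy by which the homotopy colimit theorem in the monoidal case is packaged through a codiagonal/diagonal comparison.

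To build the isomorphism $\overline{W}S\cong Ner_I\b$, I would first unpack an $n$-simplex of $\overline{W}S$ as a tuple $(t_{0,n},\dots,t_{n,0})$ with $t_{m,n-m}=(\mathbb{G}^{(m)},G^{(m)})$, where $G^{(m)}\!:[n-m]\to I$ is a functor and $\mathbb{G}^{(m)}\in Z^3([m],\b_{G^{(m)}0})$. The gluing condition $d^v_0t_{m,n-m}=d^h_{m+1}t_{m+1,n-m-1}$ forces $G^{(m+1)}=G^{(m)}d^0$ and $\mathbb{G}^{(m+1)}d^{m+1}=(G^{(m)}_{0,1})^*\mathbb{G}^{(m)}$. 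Hence all $G^{(m)}$ are determined by $G:=G^{(0)}\!:[n]\to I$, and, by functoriality of $\b$, one obtains $\mathbb{G}^{(m)}(i,j,k)=G^*_{k,m}\mathbb{G}^{(k)}(i,j,k)$ and $\mathbb{G}^{(m)}(i,j,k,\ell)=G^*_{\ell,m}\mathbb{G}^{(\ell)}(i,j,k,\ell)$ for $k,\ell\leq m$. Setting $G_{i,j,k}:=\mathbb{G}^{(k)}(i,j,k)\in\b_{Gk}$ and $G_{i,j,k,\ell}:=\mathbb{G}^{(\ell)}(i,j,k,\ell)\in\b_{G\ell}$ yields a map $\overline{W}S\to Ner_I\b$; the source and target of $G_{i,j,k,\ell}$ match the required $G^*_{k,\ell}G_{i,j,k}\otimes G_{i,k,\ell}$ and $G_{j,k,\ell}\otimes G_{i,j,\ell}$ by the identifications just displayed. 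The inverse sends $(G,\{G_{i,j,k},G_{i,j,k,\ell}\})$ to the tuple reconstructed via the same formulas. Compatibility with simplicial faces and degeneracies is routine from the definitions of the operators on $S$ and on 3-cocycles.

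The main obstacle is to verify that the coherence condition (\ref{diag1}) for $Ner_I\b$ is equivalent to the requirement that every $\mathbb{G}^{(m)}$ be a 3-cocycle in the sense of the definition preceding (\ref{nerdiag}). The key observation is that, for $0\le i\le j\le k\le\ell\le m\le n$, the cocycle coherence of $\mathbb{G}^{(m)}$ at its top index $m$ is, after the identifications above, precisely (\ref{diag1}) at $(i,j,k,\ell,m)$; whereas the coherence of $\mathbb{G}^{(m)}$ at indices $(i,j,k,\ell,p)$ with $p<m$ is obtained by applying the braided monoidal functor $G^*_{p,m}$ to the coherence of $\mathbb{G}^{(p)}$ (and hence, inductively, to (\ref{diag1}) at $(i,j,k,\ell,p)$), using that $G^*_{p,m}$ preserves the tensor product, the unit, the associativity and the braiding up to its specified structural isomorphisms. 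Carrying this out demands careful bookkeeping of the unlabelled canonical (`can') morphisms, of the unit constraints of each $G^*_{k,\ell}$, and of the role played by the braiding $\boldsymbol{c}$; this is the delicate but essentially formal heart of the argument. Once it is in place, naturality of $\Phi$ and of the isomorphism $\overline{W}S\cong Ner_I\b$ in the variable $\b$ gives the naturality of $\eta$ and concludes the proof.
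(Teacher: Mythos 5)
Your proposal is correct and follows essentially the same route as the paper: both factor $\eta$ as the weak equivalence $\Phi:\diag S\to\overline{W}S$ of (\ref{Phi}) followed by a simplicial isomorphism $\overline{W}S\cong Ner_I\b$, obtained by unwinding the gluing conditions on $\overline{W}S$-simplices to see that the whole tuple is determined by $G=G^{(0}$ together with the top-index components $G^{(k}_{i,j,k}$ and $G^{(\ell}_{i,j,k,\ell}$. The only cosmetic difference is that the paper writes down $\eta$ explicitly first and then verifies $\eta=\Psi\Phi$, whereas you define $\eta$ directly as the composite.
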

\begin{proof}
Let $(\mathbb{G},G)$ be an $n$-simplex of $hocolim_{I}Ner\b$. Then $G: [n]\rightarrow I$ is a functor and $\mathbb{G}: [n ]\rightarrow \b_{G 0}$
is a 3-cocycle of $[n]$\ in $\b_{G 0}$, that is, a system of data
$$\mathbb{G}=\{G_{i,j,k},G_{i,j,k,\ell}:G_{i,j,k}\otimes G_{i,k,\ell}\rightarrow G_{j,k,\ell}\otimes G_{i,j,\ell}\}_{0\leq i\leq j\leq k\leq \ell\leq n}$$
where $G_{i,j,k}$\ are objects and $G_{i,j,k,\ell}$\ are morphisms of $\b_{G 0} $ such that:

 \noindent -  $G_{i,i,j}=F_{i,j,j}=\mathrm{I} $, the unit object of $\b_{G0}$;

\noindent - $G_{i,i,j,k}$ is the composite of $\boldsymbol{c}_{\mathrm{I},G_{i,j,k}}$ with the unit constraint of $G_{j,k}^*$, $G_{i,j,j,k}$ is the composite of $1_{\mathrm{I}\otimes G_{i,j,k}}$ with the unit constraint of $G_{j,k}^*$ and $G_{i,j,k,k}=\boldsymbol{c}_{G_{i,j,k},\mathrm{I}}$;

\noindent and, moreover, for all $0\leq i\leq j\leq k\leq \ell\leq m \leq n$, the following diagram in $\b_{G 0}$  is commutative:\\
\begin{equation}\label{diag2}\xymatrix@C=58pt{
( G_{i,j,k}\otimes G_{i,k,\ell})\otimes G_{i,\ell,m}\ar[d]_{G_{i,j,k,\ell}\otimes 1} \ar[r]^-{\text{can}} &  G_{i,j,k}\otimes (G_{i,k,\ell}\otimes G_{i,\ell,m})\ar[d]^{1\otimes G_{i,k,\ell,m}}\\
 (G_{j,k,\ell}\otimes G_{i,j,\ell})\otimes G_{i,\ell,m}\ar[d]_{\text{can}} & G_{i,j,k}\otimes (G_{k,\ell,m}\otimes G_{i,k,m})\ar[d]^{\text{can}(\boldsymbol{c}\otimes 1)\text{can}}\\
G_{j,k,\ell}\otimes ( G_{i,j,\ell}\otimes G_{i,\ell,m})\ar[d]_{1\otimes G_{i,j,\ell,m}} &  G_{k,\ell,m}\otimes (G_{i,j,k}\otimes G_{i,k,m})\ar[d]^{\text{can}(1\otimes G_{i,j,k,m})} \\
G_{j,k,\ell}\otimes (G_{j,\ell,m}\otimes G_{i,j,m})\ar[r]_{(G_{j,k,\ell,m}\otimes 1)\text{can}} & (G_{k,\ell,m}\otimes G_{j,k,m})\otimes G_{i,j,m}\,.}
 \end{equation}

Then we  define a map
$$\eta : (hocolim_{I}Ner\b)_n\rightarrow (Ner_{I}\b)_n\,\,,\,\,\,  (\mathbb{G},G)\mapsto (\mathbb{G}',G)$$
where $$\mathbb{G}'={\left\{ G'_{i,j,k},\ G'_{i,j,k,\ell}:G_{k,\ell}^*G'_{i,j,k}\otimes G'_{i,k,\ell}\rightarrow G'_{j,k,\ell}\otimes G'_{i,j,\ell}\right\}}_{0\leq i\leq j\leq k\leq \ell\leq n}$$\ is the 3-cocycle of $[n]$\ with coefficient in   $\b G$, defined as follows:

\noindent - The objects of $\b_{Gk}$, $G'_{i,j,k}= G_{0,k}^* G_{i,j,k}$, for $0\leq i < j < k \leq n$, and $G'_{i,i,j}=G'_{i,j,j}=\mathrm{I}$, $0\leq i\leq j\leq n$, where $\mathrm{I}$ denotes  the unit object of $\b_{Gj}$.

\noindent - The morphisms $G'_{i,j,k,\ell}$ in $\b_{G\ell}$, $0\leq i\leq j\leq k\leq \ell\leq n$, given as the dotted arrow in the following diagram:

$$\xymatrix@C=50pt{G_{k,\ell}^*G'_{i,j,k}\otimes G'_{i,k,\ell}\ar@{.>}[r]^{G'_{i,j,k,\ell}}\ar[d]_{\text{can}}&G'_{j,k,\ell}\otimes G'_{i,j,\ell} \\
G_{0,\ell}^*(G_{i,j,k}\otimes G_{i,k,\ell})\ar[r]^{G_{0,k}^*G_{i,j,k,\ell}}&G_{0,\ell}^*(G_{j,k,\ell}\otimes G_{i,j,\ell})\,.\ar[u]_{\text{can}}}$$

It is straightforward to check that, in this way,  $(\mathbb{G}', G)$ is actually an $n$-simplex of $Ner_I\b$. For instance, the commutativity of diagram (\ref{diag1}) for $\mathbb{G}'$ is deduced from the commutativity of (\ref{diag2}) for $\mathbb{G}$.

Since $hocolim_{I}Ner\b=\text{diag} S$ we have, according to (\ref{Phi}), a natural weak homotopy equivalence
$$\Phi: hocolim_{I}Ner\b \rightarrow \overline{W}S$$
and we will prove that $\eta$ is also a weak homotopy equivalence by showing a simplicial isomorphism $$\Psi: \overline{W}S \cong Ner_{I}\b$$
making the following diagram of simplicial sets commutative:
$$\xymatrix{
hocolim_{I}Ner \b\ar[dr]_{\Phi}^{\simeq} \ar[rr]^{\eta}&\empty & Ner_{I}\b\\
\empty & \overline{W}S\,. \ar[ur]_{\Psi}^{\cong}& \empty }$$

According to the general description of the simplices of  $\overline{W}S$ recalled in Preliminaries,
a $p$-simplex de $\overline{W}S$, in our case, can be described as a list of pairs:
$$\chi = ((\mathbb{G}^{(0},G^{(p}),\cdots,(\mathbb{G}^{(m},G^{(p-m}),\cdots,(\mathbb{G}^{(p},G^{(0})) $$
where each $G^{(p-m}:[p-m]\rightarrow I$ is a  functor and each $\mathbb{G}^{(m}:[m]\rightarrow \b_{G^{(p-m}0}$\ is a 3-cocycle of $[m]$\ in $\b_{G^{(p-m}0}$, such that the following equalities
$$G^{(p-m}d^0 = G^{(p-m-1},\ \ \  G_{0,1}^{(p-m\ *}\mathbb{G}^{(m}=\mathbb{G}^{(m+1}d^{m+1}$$
hold, for all $0 \leq m < p$.

Writing $G^{(p}:[p]\rightarrow I$\ simply as $G :[p]\rightarrow I$, an iterated use of the above equalities proves that
$$G^{(p-m} = G d^{0}\overset{(m}\cdots d^0:[p-m]\rightarrow I,\ \ \ 0\leq m \leq p\,,\,\, \text{and}$$
$$\mathbb{G}^{(m+1}d^{m+1}\cdots d^{k+1}= G^{*}_{k,m+1}\mathbb{G}^{(k}:[k]\rightarrow \b_{G(m+1)}\ \, \ 0\leq k\leq m<p\,.$$
Since each 3-cocycle $\mathbb{G}^{(m}$ is a system of data $\{G^{(m}_{i,j,k}, G^{(m}_{i,j,k,\ell}\}$, these latter equations mean that:
$$G^{(\ell}_{i,j,k}=G^{*}_{k,\ell}G^{(k}_{i,j,k}\,\,,\, i \leq j \leq k \leq \ell\,\,; \,\,
G^{(m}_{i,j,k,\ell}= G^*_{\ell,m}G^{(\ell}_{i,j,k,\ell}\,\,,\,\, i\leq j \leq k \leq \ell \leq m\,.$$

 Thus, a $p$-simplex $\chi$\ of $\overline{W}S$\ is uniquely determined by a functor $G:[p]\rightarrow I$, the  objects $G^{(k}_{i,j,k}$\ of $\b_{G k}$ and the morphisms $G^{(\ell}_{i,j,k,\ell}$\ of $\b_{G \ell}$, for any $0\leq i\leq j\leq k \leq \ell\leq p$. Then we observe that there is a 3-cocycle
$\mathbb{G}'=\{G'_{i,j,k},G'_{i,j,k,\ell}\}$\ of $[p]$\ with coefficients in $\b G$\ defined by $G'_{i,j,k}=G^{(k}_{i,j,k}$, for any $0\leq i<j<k\leq p$, and $G'_{i,j,k,\ell}=G^{(\ell}_{i,j,k,\ell}$, for any $0\leq i<j<k<\ell\leq p$, and therefore
   a $p$-simplex $\chi$\ of $\overline{W}S$\ defines a $p$-simplex $(\mathbb{G}',G)$\ of $Ner_{I}\b$, which itself uniquely determines $\chi$. In this way, we get an injective simplicial map
$$\begin{array}{c c}
\Psi: \overline{W}S \longrightarrow Ner_{I}\b\\
((\mathbb{G}^{(0},G^{(p}),\cdots,(\mathbb{G}^{(p},G^{(0}))\mapsto (\mathbb{G}',G)=(\{G^{(k}_{i,j,k},\ G^{(l}_{i,j,k,l}\},G^{(p}\ )\,.
\end{array}$$
But also $\Psi$ is surjective. In fact, let $(\mathbb{G}',G)$ any $p$-simplex\ of $Ner_{I}\b$, that is, let $G:[p]\rightarrow I$ be a functor and let $\mathbb{G}'=\left\{G'_{ijk},G'_{ijk\ell}\right\}$ be 3-cocycle of $[p]$\ with coefficients in $\b G$. Then we can consider the  $p$-simplex $\chi=(\mathbb{G}^{(m},G^{(p-m})$\ of $\overline{W}S$\ where, for each $0\leq m\leq p$, $G^{(p-m}:[p-m]\rightarrow I$\ is the composite $[p-m]\xrightarrow{(d^0 )^m}[p] \xrightarrow{G} I$\ and the 3-cocycle $\mathbb{G}^{(m}$\ of $[m]$\ in $\b_{G^{(p-m}0}$\ is defined as follows:

\noindent - The objects $G^{(m}_{i,j,k}=G_{k,m}^* G'_{i,j,k}$;

\noindent - The morphisms $G^{(m}_{i,j,k,\ell}:G^{(m}_{i,j,k}\otimes G^{(m}_{i,k,\ell}\rightarrow G^{(m}_{j,k,\ell}\otimes G^{(m}_{i,j,\ell}$\  are given as the dotted arrow in the following diagram

$$\xymatrix@C=50pt{G_{k,m}^* G'_{i,j,k}\otimes G_{\ell,m}^* G'_{i,k,\ell}\ar[d]_{\text{can}}\ar@{.>}[r]^{G^{(m}_{i,j,k,\ell}}&G^*_{\ell,m}G'_{j,k,\ell}\otimes G_{\ell,m}G'_{i,j,\ell}\\G_{\ell,m}^*(G_{k,\ell}^*G'_{i,j,k}\otimes G'_{i,k,\ell})\ar[r]_{G^*_{\ell,m}G'{i,j,k,\ell}}&G^*_{\ell,m} (G'_{j,k,\ell}\otimes G'_{i,j,\ell})\ar[u]_{\text{can}}\,.}$$

 It is easy to check that $\Psi(\chi)=(\mathbb{G}',G)$, whence we conclude that the simplicial map $\Psi$\ is surjective and, therefore, it is an isomorphism.

Finally, since $\eta= \Psi\Phi$,   $\Psi$ is an isomorphism and $\Phi$\ is a natural weak homotopy equivalence, we have that $\eta$\ is a natural weak homotopy equivalence as claimed.
\end{proof}
\par

This theorem allows to think of the geometric realization of $Ner_I\b$ as the homotopy colimit of the classifying spaces of the braided monoidal categories $\b_i$ given by the initial data of the diagram  $\b:\mathcal{I}^{op}\rightarrow \bmcat$.

\bibliography{biblioCatHo}{}
\bibliographystyle{plain}

\end{document}